\numberwithin{equation}{section}
\newcommand{\SLZ} {\rm {SL}_2(\mathbb{Z})}
\newcommand{\PSLZ}{\rm{PSL}_2(\mathbb{Z})}
\theoremstyle{plain}
\newtheorem{thm}{\protect\theoremname}
\providecommand{\theoremname}{Theorem}
\begin{document}

\author{Hartmut Monien}

\address{
  Bethe Center\\
  University Bonn\\
  Nussallee 12\\
  53115 Bonn\\
  Germany
}

\email{hmonien@uni-bonn.de}

\subjclass[2010]{12F12,11F11}
\keywords{Inverse Galois Problem, Belyi Maps, almost simple groups}

\date{\today}

\title{The sporadic group $J_2$\\Hauptmodul and Bely{\u\i} map}

\begin{abstract}
  Determining Fourier coefficients of modular forms of a finite index
  noncongruence subgroups of $\PSLZ$ is still a non-trivial task
  \cite{LiLongYang}. In this brief note we describe a new algorithm to
  reliably calculate an approximation for a modular form of a given
  weight. As an example we calculate the hauptmodul and Bely{\u\i} map of a
  genus zero subgroup of the modular group defined via a canonical
  homomorphism by the second Janko group. Our main result is the field
  of definition of its Bely{\u\i} map and the Fourier coefficients of its
  hauptmodul.
\end{abstract}

\maketitle 

\section{Introduction}

In their paper on noncongruence subgroups Atkin and Swinnerton-Dyer
\cite{Atkin_Swinnerton-Dyer} gave a description of a practical
computational algorithm to obtain numerical approximations to modular
forms. Their ideas were further developed by Hejhal \cite{Hejhal},
Str\"omberg \cite{Stromberg}, Booker, Str\"ombergson and Venkatesh
\cite{Booker} to calculate Maass waveforms and by Selander and
Str\"ombergson \cite{SelanderStrombergson} to calculate a Bely{\u\i}
map \cite{BelyiA,BelyiB}.

In this brief note we describe an iterative algorithm based on these
ideas to calculate values of modular functions of any finite index
subgroup $\Gamma\subset\PSLZ$ of the modular group numerically to any
given precision and use it to obtain Bely{\u\i} maps. To demonstrate the
effectiveness of the algorithm we proof the following theorem
\begin{thm}
  Let $\sigma_0,\sigma_1 \in S_{100}$ be the permutation given in the
  appendix. Then the Janko group $J_2$ is generated by $\sigma_0$ and
  $\sigma_1$ and the triple
  $(\sigma_0, \sigma_1, (\sigma_0\sigma_1)^{-1})$ define up to
  simultaneous conjugation a subgroup $\Gamma$ of the full modular
  group $\PSLZ$ with a rational Bely{\u\i} map
  $\Phi:X(\Gamma)\rightarrow\mathbb{P}^1$ which obeys the equation
  $\Phi(z) = p_3(z)/p_c(z) = 1728 + p_2(z)/p_c(z)$ relating the
  branching at the elliptic points of order two and three with the
  polynomial $p_2$, $p_3$ and $p_c$ given in the accompanying
  material. These polynomials are defined over the number field
  $K=\mathbb{Q}[a]/(a^{10} - a^{9} - 2 a^{8} + 8 a^{7} - 14 a^{6} + 35
  a^{4} - 68 a^{3} + 89 a^{2} - 74 a + 23)$ with the Galois group
  $Gal(K/\mathbb{Q})=S_2 \wr S_5$ of order 3840 and discriminant
  $3^6 5^3 7^8$.
\end{thm}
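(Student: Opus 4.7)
The plan is as follows. First, the combinatorial claims reduce to finite computations: one checks directly in $S_{100}$ that $\langle \sigma_0,\sigma_1\rangle = J_2$, that $\sigma_0^2 = \sigma_1^3 = 1$, and that the triple $(\sigma_0,\sigma_1,\sigma_\infty)$ with $\sigma_\infty=(\sigma_0\sigma_1)^{-1}$ is transitive. By the standard dictionary between transitive permutation triples of signature $(2,3,\cdot)$ and finite index subgroups of $\PSLZ$, this datum determines a subgroup $\Gamma$ of index $100$ up to simultaneous conjugation. Reading off the cycle types of $\sigma_0$, $\sigma_1$, $\sigma_\infty$ determines the ramification of the induced covering $\Phi:X(\Gamma)\to X(\PSLZ)=\mathbb{P}^1_j$ over $1728$, $0$, $\infty$; Riemann--Hurwitz then yields $g(X(\Gamma))=0$, so a hauptmodul exists, and fixes the degrees of $p_2$, $p_3$, $p_c$.

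Second, I would compute $\Phi$ numerically using the iterative algorithm described in the introduction: approximate a hauptmodul $t$ for $\Gamma$ to high precision by determining a truncated $q$-expansion at the distinguished cusp that is invariant under a chosen generating set for $\Gamma$ on a suitable set of sample points in a fundamental domain, then evaluate $j(\tau)$ and $j(\tau)-1728$ at those points and express them as rational functions of $t$ to obtain high-precision approximations to the coefficients of $p_3$, $p_2$ and $p_c$.

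Third, I would recognize the numerical coefficients as exact algebraic numbers by LLL/PSLQ. The common minimal polynomial that emerges is the degree-$10$ polynomial displayed in the theorem, so the coefficients lie in the stated field $K$. The Galois group $S_2\wr S_5$ of order $3840$ and the discriminant $3^6 5^3 7^8$ are then a routine computer-algebra verification on the defining polynomial, the imprimitive wreath structure reflecting a natural quadratic subfield indexed by a degree-five resolvent of the coefficient arithmetic.

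Finally, I would certify the answer \emph{a posteriori}: with candidate $p_2,p_3,p_c\in K[z]$ in hand, the claim $\Phi(z)=p_3/p_c=1728+p_2/p_c$ reduces to the polynomial identity $p_3-p_2=1728\,p_c$ in $K[z]$, a finite exact check, and to confirm that this rational function realizes the prescribed monodromy it suffices to match the factorization patterns of $p_3$, $p_2$ and $p_c$ with the cycle types of $\sigma_0$, $\sigma_1$, $\sigma_\infty$. The main obstacle is numerical: recovering coefficients in a degree-$10$ number field via LLL requires solving the linear system for the hauptmodul at several hundred digits of precision, which is precisely where the reliability of the new iterative algorithm is decisive; once exact candidates have been produced, however, the algebraic identities above provide a rigorous certificate independent of the numerics.
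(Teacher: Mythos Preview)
Your proposal is correct and follows essentially the same route as the paper: verify the group-theoretic claims by finite computation, invoke the Millington correspondence to obtain the index-$100$ subgroup $\Gamma$, read off the ramification data to see that $X(\Gamma)$ has genus zero, and then certify the explicit polynomials by the exact identity $p_3 - p_2 = 1728\,p_c$ together with the factorization patterns. The only substantive difference is that the paper cites Magaard's classification of sporadic monodromy groups with genus-zero quotient in place of your direct Riemann--Hurwitz count, and it is terser about the numerical recognition and a~posteriori certification steps that you spell out.
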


\begin{proof}
  It is easy to check that the permutations $\sigma_0$ and $\sigma_1$
  fulfil the defining relations of the Janko group $J_2$,
  $\sigma_0^2=\sigma_1^3= \left(\sigma_0\sigma_1\right)^7=
  \left(\sigma_0\sigma_1\sigma_0\sigma_1^2\right)^{12}=1$.  The
  theorem $A$ of Magaard \cite{Magaard} states which of the sporadic
  groups with triple $(\sigma_0, \sigma_1, (\sigma_0\sigma_1)^{-1})$
  defines a subgroup $\Gamma$ of $\PSLZ$ with a Riemann surface
  $X(\Gamma)$ of genus zero.  In addition this is an admissible for
  the theorem of Millington \cite{Millington} and therefore define (up
  to simultaneous conjugation) an index 100 subgroup
  $\Gamma\subset\PSLZ$. Using theorem (3.1) of Hsu \cite{Hsu} it is
  easy to verify that the group is a noncongruence subgroup of
  $\PSLZ$. This group has no elliptic point of order 2 and 4 elliptic
  points of order 3 and 2 cusps of width 1 and 14 cusps of order 7.
  This defines the branching structure of the Bely{\u\i} map
  $\Phi:X(\Gamma)\rightarrow\mathbb{P}^1$ and the factorization of the
  polynomials. Using the explicit form of the polynomials an easy
  calculation shows that $p_3(z)/p_c(z) = 1728 + p_2(z)/p_c(z)$ which
  completes the proof.
\end{proof}

A number of major improvements make the algorithm numerically stable
and efficient enough to use arbitrary precision arithmetic. The new
ideas are: using domain decomposition techniques for the modular
tessellation, fast Fourier transform to precondition the resulting
linear system, Krylov subspace methods to solve the resulting linear
equations iteratively.  The algorithm can easily be extended to
calculate cusp forms and vector valued modular forms. We have
implemented our algorithm in {\em Haskell} \cite{Haskell} which allows
to easily write generic arbitrary precision code and to parallelize
the domain decomposition.

The determination of large zero Bely{\u\i} maps is in general a
difficult problem \cite{Zvonkin}. The subject has recently attracted
interest and some substantial results were obtained for genus zero
Belyi maps of degree up to 250 \cite{Wuerzburg}, the HS sporadic
group\cite{WuerzburgHS}, for Hurwitz groups \cite{DavidRoberts} and
composite genus 1 Belyi maps \cite{Vidunas}.  For a detailed review of
previously known methods and some new techniques and results see
\cite{SisjlingVoight} and my lecture at the Euler Institute 2014
\cite{StPetersburgTalk} for a more detailed account of complex
analytic methods.

\section{Basic Setup}
Let $\PSLZ = \SLZ/\{\pm 1\}$ be the projective special linear group over
$\mathbb{Z}$ acting on the complex upper half plane
$\mathcal{H} = \{x+iy\in\mathbb{C}\;|\;y >0\}$ by
M\"obius transformations $\gamma\in\PSLZ$
\[
  \gamma
  =\left(\begin{array}{cc}a&b\\c&d\end{array}\right):
  \mathcal{H}\rightarrow \mathcal{H},\  
  z\mapsto\gamma z = \frac{a z + b}{c z +d}.
\]
Let $G$ be a finitely presented group and $H$ a finite index subgroup
of $G$. The generators of $G$ induce a permutation of the right coset
$H\backslash G$. Then the group $H$ is determined up to isomorphism by
the permutations induced by the action of the generators of $G$ on the
right coset $H\backslash G$. This fact allows for the classification
of all finite index subgroups of $\PSLZ$.  A theorem by Millington
\cite{Millington} states that up to simultaneous conjugation there is
a one to one correspondence between triples of permutations
$\{(\sigma_0, \sigma_1,\sigma_\infty)\in
S^3_d\;|\;\sigma_0^2=\sigma_1^3=\sigma_0\sigma_1\sigma_\infty=id\}$
with the finite index subgroups of $\PSLZ$ where $S_d$ is the
symmetric group of $d$ objects (in this case cosets).  Let
$\Gamma\subset\PSLZ$ be a finite index subgroup of the modular group
of index $d$ defined by a triple of permutations as above.  If
$\Gamma$ contains
$ \left\{\gamma \in \PSLZ\;|\:\gamma = \pm I_2\mod N \right\} $ for
some fixed integer, $N$, it is called a congruence subgroup otherwise it is
called a noncongruence subgroup. Both cases can be distinguished
easily by testing relations of the generators \cite{Hsu}.  A
fundamental domain $\mathcal{F}(\Gamma)$ of $\Gamma$ is a subset of
$\mathcal{H}$ such that for all $z\in\mathcal{H}$ there is exactly one
element $\gamma\in\Gamma$ for which $\gamma z
\in\mathcal{F}(\Gamma)$. Here we choose
$\mathcal{F}=\left\{z=x+iy\in\mathbb{C}\;|\;x\in[-1/2, 1/2), |z| \ge
  1\right\}$ as fundamental domain for the full modular group. Since
$\Gamma$ is a subgroup of finite index $d$ we have
$\PSLZ = \cup^{d}_{i=1} \Gamma \gamma_i$ where
$\{\gamma_1,\gamma_2\ldots \gamma_d\}\subset\PSLZ$ is a list of right
coset representatives which implies that
$\mathcal{F}(\Gamma)=\cup_{i=1}^d\gamma_i\mathcal{F}$ is a fundamental
domain of $\Gamma$.

In general $\Gamma$ will contain a number of finite index subgroups of
of the Borel subgroup with fix points in $\mathbb{P}^1(\mathbb{Q})$
called cusps. Two cusps related by a $\gamma\in\Gamma$ are called
equivalent. Each cycle of $\sigma_\infty$ gives rise to a cusp
(equivalence class) with the corresponding cusp width being defined as
the length of the cycle. Let $k\in\{0,..\kappa\}$ enumerate the cycles
and let $\{x_0,x_1..x_\kappa\}$ be a list of unique inequivalent cusp
representatives and $w_k$ be the width of cusp $x_k$.  Let
$\mathcal{H^*} = \mathcal{H}\cup\mathbb{P}^1(\mathbb{Q})$ be the
extended complex upper half plane. A modular function is a complex
valued function that extends to a meromorphic function on the
compactified upper half plane $f:\mathcal{H}^*\rightarrow\mathbb{C}$
satisfying $f(\gamma z) =f(z)$ for every $\gamma$ and every
$z\in\mathcal{H}$.  The genus of $\Gamma$ is defined to be the genus
of the Riemann surface $X(\Gamma)=\Gamma\backslash\mathcal{H}^*$.
Here we focus on the genus zero case where the field of modular
functions is generated by one modular function usually called
hauptmodul. We choose one of the cusps with the smallest cusp width,
$w_0$, as principal cusp and fix the generator
$j_\Gamma:\mathcal{H}^*\rightarrow\mathbb{C}$ uniquely by imposing a
growth condition on it so that as $y\rightarrow\infty$ it grows like
$j_\Gamma(z=x+i y) = q^{-1} + 0 + O\left(q\right)$ where
$q = \exp(2\pi i z/w_0)$ and stays finite at all the other cusps.  The
rational map $\Phi:X(\Gamma)\rightarrow\mathbb{P}^1$ is of degree $d$
and a famous theorem by Bely{\u\i} \cite{BelyiA,BelyiB} asserts that
it can be defined up to isomorphism over $\overline{\mathbb{Q}}$.
Once $j_\Gamma(z)$ is determined fix an explicit representation of
$\Phi$ relating the modular $j:\mathcal{H}\rightarrow\mathbb{C}$
invariant of the full modular group to $j_\Gamma$ so that we have
$\Phi\left(j_\Gamma(z)\right) = j(z)$ for all $z\in\mathcal{H}$. We
choose the normalization of the modular $j$ invariant such that at the
elliptic point of order two the modular invariant takes on the value
$j(i)=1728$ and vanishes at the elliptic point of order three
$j((1+i\sqrt{3})/2) = 0$.
 
Let $k$ enumerate the cusps with $k=0$ being the principal
cusp. Define $\nu_0 = T^{w_0}$ where
\[
  T = \left(\begin{array}{cc}1&1\\0&1\end{array}\right)
\]
and for the remaining cusps $x_k=(p,q)$, with $\gcd(p,q)=1$, pick a
fixed $p'$ from the solutions of the congruence $p p'= -1\mod q$ to
define \cite{Zuckerman}
\[
  \nu_k=\left(\begin{array}{cc}-p'&-(p p' + 1)/q\\q&-p\end{array}\right).
\]
It is easy to see that $\nu_k$ sends $x_k$ to infinity and
$\nu_k^{-1} T \nu_k$ stabilizes $x_k$.  The modular function can therefore
be expanded at each cusp in a Poincar\'e series
\cite{Zuckerman,SelanderStrombergson}
\begin{equation}
  \label{eq:1}
  j_\Gamma(z) = \sum_{m=\mu_k}^\infty a^{(k)}_m q_k^m   
\end{equation}
with $\mu_0=-1$ and $\mu_k=0$ for $k>0$,
$q_k=\exp\left(2\pi i z'/w_k\right)$, $z' = \nu_k z$ and (unknown)
Fourier coefficients $a_m^{(k)}$. Here we set the principal part
$a^{(0)}_{-1}=1,$ and fix the arbitrary constant term $a^{(0)}_0=0$ in
agreement with the growth condition. The Fourier coefficients are
known to have at most polynomial growth with $m$. The Poincar\'e
series for different cusps are obviously not independent.  Modular
transformations $\gamma\notin\Gamma$ act transitively on the cusps. 
Zuckerman \cite{Zuckerman} used this fact to generate linear relations
between the expansions on different cusps for general finite index subgroup
$\Gamma\subset\PSLZ$.

\section{Description of the numerical algorithm}

From an numerical analysis point of view determining a numerical
approximation for a modular function is equivalent to solving two
Laplace equations for two real functions on an infinitely extended
domain with complicated ``periodic'' boundary conditions (coming from
the generators of the finite index subgroup).  Domain decomposition is
a technique for the large scale numerical solution of boundary value
problems of elliptic partial differential equations. It goes back to
an 1870 paper by H. Schwarz \cite{Schwarz} and is one of several
constructive techniques of conformal mapping he developed for the
uniformization problem. The basic idea is to solve a Dirichlet problem
for an elliptic partial differential equation on a complicated domain
by decomposing it into simpler overlapping domains on which the
Dirichlet problem is easily solved. Its convergence for general second
order differential equations has been proven by Mihlin\cite{Mihlin}
for very general domains. We now explain how to apply these ideas to
the calculation of modular functions.
 
For each cusp with label $k$ we define a domain
$\mathcal{F}_k$ of the complex upper half plane by
\[
  \mathcal{F}_k = 
  \left\{x+iy\in\mathbb{C}\;|\;  x\in[-1/2,w_k-1/2),
    y\in[1/2,\infty)\right\}.
\]
At this point the important observation here is that the preimage of
$\cup_k\nu_k^{-1}\mathcal{F}_k$ form an overlapping domain
decomposition of the fundamental domain $\mathcal{F}(\Gamma)$.  The
domain $\mathcal{F}_k$ contains the cosets of the cycle (cusp) $k$ as
well as parts of its neighboring cosets determined by $\sigma_0$
and $\sigma_1$.  We solve the Laplace equation in the domain
$\mathcal{F}_k$. The periodic boundary conditions and the boundary
condition as $y\rightarrow\infty$ are taken care of by the particular form of
the Fourier expansion. The Dirichlet boundary condition is given by
the values of the hauptmodul calculated at the lower boundary of the
domain which are determined by the expansions in the domains
$\mathcal{F}_{k'}$ with $k'\ne k$ and the upper boundary. We
approximate the modular function by a truncated expansions
\begin{equation}
  \label{eq:1}
  j^{(k)}_\Gamma(z) = \sum_{m=\mu_k}^{N_k} a^{(k)}_m q_k^m   
\end{equation}
where we choose $N_k = N w_k$ with $N$ fixed. Suppose now that we
already have obtained some approximation for the Fourier coefficients
at all cusps. For any given $z$ in $\mathcal{H}$ we can determine
algorithmically a unique coset representative $\gamma_j$ such that
$\gamma_l z\in\mathcal{F}$. The label $l$ can be found uniquely in one
of the cycles of $\sigma_\infty $, say in cycle $k$, since the
permutation group generated by $\sigma_0$ and $\sigma_1$ acts
transitively. Then we approximate $j_\Gamma(z)$ by
$j_\Gamma^{(k)}(z)$. For each $k$ we evaluate the approximate modular
function according to the procedure outlined above at points
$z^{k}_j = \nu_k^{-1} (w_k j/M + i/2-1/2)$ for $j\in\{0,1..M-1\}$
where $M$ is chosen such that $M$ is a power of two with integer
exponent and $M>2\times N w_k$. The first condition ensures that we
can use the simplest form of fast Fourier transform to obtain the
Fourier coefficients while the second condition avoids aliasing
effects. We then apply FFT to determine the expansion coefficients of
$j_\Gamma^{(k)}$ for every domain.  This completes one step of the
domain decomposition iteration.  We can view the procedure as
iterative solution of the linear equations for the Fourier
coefficients with the inhomogeneous part arising from the principal
cusp. The rate of convergence is determined by the largest eigenvalue
of the (linear) iteration operator which is less than one if the
overlap of the domains is finite \cite{Mihlin} which implies linear
convergence. We observe linear convergence when using the simple
(Picard) iteration. The advantage of the procedure described above is
that convergence is guaranteed and it can easily be implemented in
multiprecision arithmetic \cite{MPFR}. A further advantage is that it can be used
as a pre-conditioner for more sophisticated iterative techniques which
go under the name of Krylov methods \cite{Trefethen,Saad} to solve the
linear equation. Using GMRES \cite{Saad} which we implemented in
multiprecision we actually do observe quadratic convergence in all
cases we have investigated. One important point is that compared to
direct methods the operation count is substantially reduced.

\section{Appendix}
The permutations of Theorem (1) are defined by (data from the Atlas
finite groups \cite{Atlas}):
\begin{eqnarray*}
  \sigma_0 &=& (1,84)(2,20)(3,48)(4,56)(5,82)(6,67)(7,55)(8,41)(9,35)(10,40)\\
    &&(11,78)(12,100)(13,49)(14,37)(15,94)(16,76)(17,19)(18,44)(21,34)\\
    &&(22,85)(23,92)(24,57)(25,75)(26,28)(27,64)(29,90)(30,97)(31,38)\\
    &&(32,68)(33,69)(36,53)(39,61)(42,73)(43,91)(45,86)(46,81)(47,89)\\
    &&(50,93)(51,96)(52,72)(54,74)(58,99)(59,95)(60,63)(62,83)(65,70)\\
    &&(66,88)(71,87)(77,98)(79,80)\\
  \sigma_1 &=&(1,80,22)(2,9,11)(3,53,87)(4,23,78)(5,51,18)(6,37,24)(8,27,60)\\
    &&(10,62,47)(12,65,31)(13,64,19)(14,61,52)(15,98,25)(16,73,32)\\
    &&(17,39,33)(20,97,58)(21,96,67)(26,93,99)(28,57,35)(29,71,55)\\
    &&(30,69,45)(34,86,82)(38,59,94)(40,43,91)(42,68,44)(46,85,89)\\
    &&(48,76,90)(49,92,77)(50,66,88)(54,95,56)(63,74,72)(70,81,75)\\
    &&(79,100,83)
\end{eqnarray*}
The polynomials $p_2$, $p_3$ and $p_c$ can be found in the data files
``p2.txt'', ``p3.txt'' and ``pc.txt'' accompanying the paper.
\end{document}